\newtheorem{thm}{Theorem}
\newtheorem{lem}[thm]{Lemma}
\newtheorem{lemma}[thm]{Lemma}
\newtheorem{theorem}[thm]{Theorem}
\newtheorem{corollary}[thm]{Corollary}
\def\mand{\qquad\mbox{and}\qquad}
\def\bbbf{{\mathrm I\!F}}
\def\bbbf{{\mathbb F}}
\def \F{{\bbbf}}
\def\ordp{{\,\mathrm{ord}_p}\,}
\def\\{\cr}
\def\({\left(}
\def\){\right)}
\def\[{\left[}
\def\]{\right]}
\def\<{\langle}
\def\>{\rangle}
\def\cA{{\mathcal A}}
\def\cB{{\mathcal B}}
\def\cC{{\mathcal C}}
\def\cD{{\mathcal D}}
\def\cE{{\mathcal E}}
\def\cX{{\mathcal X}}
\def\cY{{\mathcal Y}}
\begin{document}
\date{ }
\author{
{\sc   Bryce Kerr} \\
{Department of Computing, Macquarie University} \\
{Sydney, NSW 2109, Australia} \\
{\tt  bryce.kerr@mq.edu.au}}
\title{ Incomplete exponential sums over exponential functions}

 \maketitle

\begin{abstract}
We extend some methods of bounding exponential sums of the type $\displaystyle\sum_{n\le N}e^{2\pi iag^n/p}$ to deal with the case when $g$ is not necessarily a primitive root. We also show some recent results of Shkredov concerning additive properties of multiplicative subgroups  imply new bounds for the sums under consideration.
\end{abstract}
\section{Introduction}
For $p$ prime, $g \in \F_p^{*}$ of order $t$ and integer $N\leq t$ we consider the sums
\begin{equation}
\label{SUM}
S_{g,p}(\lambda,N)=\displaystyle\sum_{n=1}^{N}e_p(\lambda g^n)
\end{equation}
where $e_p(z)=e^{2\pi i z/p}$ and $\gcd(\lambda,p)=1$. Estimates for $S_{g,p}(\lambda,N)$ have been considered in a number of works. For instance Korobov~\cite{Kor} obtains the bound
\begin{equation}
\max_{\gcd(\lambda,p)=1}|S_{g,p}(\lambda,N)|\ll p^{1/2}\log{p}
\end{equation}
which is used to study the distribution of digits in decimal exansions of rational numbers (see also \cite{ShpSte} and references therein). If $g$ is a primitive root, Bourgain and Garaev \cite{BouGar}  give bounds for the number of solutions to the equation
$$g^{x_1}+g^{x_2}\equiv g^{x_3}+g^{x_4} \pmod p, \quad 1\le x_1,\dots,x_4 \le N,$$
which they use to estimate $S_{g,p}(\lambda,N)$. Konyagin and Shparlinski~\cite{KoSh} improve on this bound and give applications to the gaps between powers of a primitive root.\newline \indent The case of complete sums with $N=t$ have also been considered by a number of authors (see for example \cite{HBK}) from which corresponding bounds for the incomplete sums can be obtained using a method of~\cite{Sh}.
\newline
\indent
  We show that the proof of~\cite[Theorem 1.4]{BouGar} can be generalized to deal with the case when $g$ is not a primitive root. This gives an upper bound for the sums
\begin{equation}
\label{4th}
\sum_{\lambda \in  \F_p^*}\left|S_{g,p}(\lambda,N)\right|^4.
\end{equation}
We then combine the argument of~\cite[Theorem 1]{KoSh} and our upper bound for~\eqref{4th} to deduce a bound for $S_{g,p}(\lambda,N)$. Next we show that~\cite[Theorem 34]{Sh} combined with a method of \cite{Sh} gives another bound for $S_{g,p}(\lambda,N)$. \newline \indent We use the notation $f(x) \ll g(x)$ and $f(x)=O(g(x))$ to mean there exists some absolutle constant $C$ such that $f(x)\le Cg(x)$ and $f(x)=o(g(x))$ will mean that $f(x)\le \varepsilon g(x)$ for any $\varepsilon>0$ and sufficiently large $x$.
\section{Main results}

\begin{theorem}
\label{lem:Bound SN4}
For prime $p$  and $g\in \F_p^*$ of order $t$ and integer $N\le t$, we have
$$
\sum_{\lambda \in  \F_p^*}\left|S_{g,p}(\lambda,N)\right|^4 \ll p N^{71/24+o(1)} \(1 + (N^2/t)^{1/24}\)
$$
as $N\rightarrow \infty$.
\end{theorem}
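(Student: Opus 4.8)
The plan is to follow the strategy of Bourgain–Garaev for primitive roots, but keeping careful track of the order $t$ and how far $N$ is from $t$. First I would expand the fourth moment combinatorially: by orthogonality,
$$
\sum_{\lambda \in \F_p^*}\left|S_{g,p}(\lambda,N)\right|^4 = (p-1)\,J - \text{(main term)},
$$
where $J$ counts solutions to $g^{x_1}+g^{x_2} \equiv g^{x_3}+g^{x_4} \pmod p$ with $1 \le x_1,\dots,x_4 \le N$. So the problem reduces to bounding this additive-energy-type quantity for the set $\{g,g^2,\dots,g^N\}$, which sits inside the multiplicative subgroup $H = \langle g\rangle$ of order $t$. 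The key point is that the exponents live in the short interval $[1,N]$, so I am really counting $g^{x_1}+g^{x_2}=g^{x_3}+g^{x_4}$ with the extra interval constraint, not the full energy of $H$.

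The main engine should be a combination of a Stepanov/Heath-Brown–Konyagin–style argument controlling how many times a value can be represented, together with the observation that translating the interval $[1,N]$ inside $\Z/t\Z$ lets one average. Concretely, I would fix the difference $u = x_1 - x_3$ and $v = x_2 - x_3$ (working modulo $t$), so that a solution corresponds to $g^{x_3}(g^u + g^v - g^{u+v} - 1) \equiv 0$, i.e. $g^u + g^v \equiv g^{u+v}+1$; for each admissible pair $(u,v)$ the number of valid $x_3$ giving all four exponents in $[1,N]$ is at most $N$ (and genuinely smaller when the shifts are large). Thus $J \le N \cdot T$, where $T = \#\{(u,v) \bmod t : g^u + g^v \equiv g^{u+v}+1\}$, plus the degenerate contributions where two of the $x_i$ coincide, which are $O(N^2)$. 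Bounding $T$ is exactly a question about the number of points on the curve $X + Y = XY + 1$, equivalently $(X-1)(Y-1)=0$ — wait, that factorization would make $T$ trivially of size $2t$; the non-trivial content must instead come from \emph{not} fixing all differences but only partially, so one should instead set it up as counting $g^{x_1} - g^{x_3} = g^{x_4} - g^{x_2}$ and bound the number of ways a fixed element of $\F_p$ is a difference of two elements $g^a - g^b$ with $a,b \in [1,N]$. This is where the interval structure and the subgroup structure interact, and this is the step I expect to be the main obstacle.

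To push past the trivial bound I would invoke the point-counting estimates for exponential curves from \cite{BouGar} (their Theorem 1.4 and the lemmas feeding it), which bound the number of solutions to $g^{x_1}+g^{x_2}\equiv g^{x_3}+g^{x_4}$ by $N^{8/3+o(1)}$ when $N$ is not too large relative to $p$ and $t$. The generalization needed is to replace "$g$ primitive root, $t = p-1$" by "$g$ of order $t$"; the Stepanov-method polynomial constructed there uses only that $g$ generates a subgroup and that $N \le t$, so the construction goes through verbatim, and one tracks an extra factor measuring the loss when $N^2 > t$ (this is the source of the $(1 + (N^2/t)^{1/24})$ factor — when $N^2 \le t$ the interval $[1,N]$ is "generic" inside $\Z/t\Z$, and when $N^2 > t$ there is an unavoidable degeneracy contributing $\sim N^4/(pt)$ extra solutions after multiplying by $p$). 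Combining: $J \ll N^{8/3+o(1)}(1 + (N^2/t)^{1/24})$ up to lower-order terms — here I would need to recheck the exact exponent, since the stated $N^{71/24} = N^{8/3}\cdot N^{7/24}$ suggests an extra $N^{7/24}$ loss relative to the clean primitive-root case, presumably coming from the weaker point count available without $t = p-1$. Multiplying by $p-1$ from the orthogonality step and absorbing the $O(pN^2)$ diagonal term (negligible since $N^2 \ll N^{71/24}$) yields the claimed bound. Finally I would double-check that the main term from orthogonality is genuinely of lower order and can be discarded, and that the $o(1)$ absorbs all the $p^{o(1)}$ and $\log$ factors from the Stepanov argument.
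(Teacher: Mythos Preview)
Your reduction to the count $J=J(g,N)$ of solutions to $g^{x_1}+g^{x_2}\equiv g^{x_3}+g^{x_4}$ with $x_i\in[1,N]$ is correct, and ``generalize the Bourgain--Garaev argument'' is the right slogan. But the proposal then misidentifies what that argument actually is. Theorem~1.4 of \cite{BouGar} is \emph{not} proved via a Stepanov-type polynomial construction or Heath-Brown--Konyagin point counting; it is an additive-combinatorics argument. Concretely, one passes from the additive energy $J=\cE_+(\cA,\cA)$ (with $\cA=\{g^x:1\le x\le N\}$) to a \emph{restricted sumset} $\cA+_{\cE_0}\cA$ via a Balog--Szemer\'edi--Gowers-type lemma (\cite[Lemma~2.4]{BouGar}), then from the restricted sumset to an ordinary sumset $\cA_1+\cA_2$ of dense subsets via \cite[Lemma~2.3]{BouGar}, and finally one needs a \emph{lower bound} on $\#(\cA_1+\cA_2)$ for sets of the shape $\cA_i=\{g^x:x\in\cX_i\}$ with $\cX_i$ contained in an interval of length $N$. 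That last step is \cite[Lemma~2.8]{BouGar}, and \emph{this} is the lemma that must be generalized to non-primitive $g$ (the paper's Lemma~\ref{sum}); its proof uses Pl\"unnecke--Ruzsa inequalities and the Glibichuk--Konyagin growth lemmas \cite{GlibKon}, and the order $t$ enters when one analyses the multiplicative orders $\ordp(a/a')$ of ratios of elements of $\cA$, producing the dichotomy responsible for the factor $\bigl(1+(N^2/t)^{1/24}\bigr)$.

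Your plan contains none of this machinery, and the detours through ``fix the differences $u,v$'' or ``bound the number of representations of a fixed element as $g^a-g^b$'' do not lead anywhere nontrivial without it (as you effectively discovered when the curve degenerated to $(X-1)(Y-1)=0$). Two smaller points: the exponent $8/3$ you quote is not what \cite{BouGar} prove --- already in the primitive-root case their bound is $J\ll N^{71/24+o(1)}$, so the $71/24$ in the statement is not a loss relative to the primitive-root case but the same exponent, the only new feature being the correction $\bigl(1+(N^2/t)^{1/24}\bigr)$ when $N^2>t$; and there is no separate ``main term'' to subtract in the orthogonality step, since $\sum_{\lambda\in\F_p}|S_{g,p}(\lambda,N)|^4=pJ$ exactly.
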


We use Theorem~\ref{lem:Bound SN4} to deduce

\begin{theorem}
\label{KoSh1}
For $g\in \F_p^*$ of order $t$ and integer $N\le t$, we have
$$\max_{\gcd(\lambda,p)=1}|S_{g,p}(\lambda,N)| \leq \begin{cases}p^{1/8}N^{71/96+o(1)}, \quad \quad  \quad \   \ \ \  N\le t^{1/2},
\\ p^{1/8}t^{-1/96}N^{73/96+o(1)}, \quad   \ \ \ t^{1/2}< N\le p^{1/2},
\\p^{1/4}t^{-1/96}N^{49/96+o(1)}, \quad  \ \ \ p^{1/2}<N < t, \end{cases}$$
as $N \rightarrow \infty$.
\end{theorem}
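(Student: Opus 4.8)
The plan is to run the amplification argument of \cite[Theorem~1]{KoSh}, feeding it the fourth moment estimate of Theorem~\ref{lem:Bound SN4}. Fix $\lambda$ with $\gcd(\lambda,p)=1$ at which the maximum $V:=\max_{\gcd(a,p)=1}|S_{g,p}(a,N)|$ is attained; it suffices to bound $V$, and since $N\to\infty$ we may absorb absolute constants into the $N^{o(1)}$ terms throughout.

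The elementary input is dilation: for $1\le k\le\min(V,t-N)$ the substitution $n\mapsto n+k$ gives $S_{g,p}(\lambda g^k,N)=\sum_{n=k+1}^{N+k}e_p(\lambda g^n)$, which differs from $S_{g,p}(\lambda,N)$ by at most $2k$ in absolute value; hence the set
$$
\mathcal L=\{a:\ \gcd(a,p)=1,\ |S_{g,p}(a,N)|\ge V/2\}
$$
contains all $\lambda g^k$ with $0\le k\le V/4$, so $|\mathcal L|\gg V$. (When $N>t-V/4$, that is when $N$ is very close to $t$, one argues separately; this range does not affect the statement.) The heart of the matter --- and the step I expect to be the main obstacle --- is to upgrade this to
\begin{equation}
\label{eq:plan-large-spectrum}
|\mathcal L|\ \gg\ \max\bigl(p^{1/2},\,N\bigr)\,N^{-o(1)}.
\end{equation}
Here one must exploit that $g,g^2,\dots,g^N$ is a geometric progression and so carries essentially no additive structure, so that the largeness of $|S_{g,p}(a,N)|$ cannot be concentrated on few residues $a$; this is precisely the bilinear/sum--product ingredient of \cite[Theorem~1]{KoSh}, and it is what supplies the factor $p^{1/2}$ improving on the trivial deduction. (Indeed $|\mathcal L|\gg V$ alone only yields $V\ll\bigl(pN^{71/24+o(1)}(1+(N^2/t)^{1/24})\bigr)^{1/5}$, which is weaker than the asserted estimates in the relevant ranges, so \eqref{eq:plan-large-spectrum} is genuinely needed.)

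Granting \eqref{eq:plan-large-spectrum}, the rest is routine. Since each $a\in\mathcal L$ contributes $|S_{g,p}(a,N)|^4\ge V^4/16$, Theorem~\ref{lem:Bound SN4} gives
$$
\tfrac1{16}V^4|\mathcal L|\ \le\ \sum_{a\in\F_p^*}|S_{g,p}(a,N)|^4\ \ll\ pN^{71/24+o(1)}\bigl(1+(N^2/t)^{1/24}\bigr),
$$
and combining this with \eqref{eq:plan-large-spectrum},
$$
V^4\ \ll\ \frac{pN^{71/24+o(1)}\bigl(1+(N^2/t)^{1/24}\bigr)}{\max(p^{1/2},N)}.
$$
Now split into cases. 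Since $t\mid p-1$ we have $t^{1/2}<p^{1/2}$, so for $N\le t^{1/2}$ the factor $1+(N^2/t)^{1/24}$ is $O(1)$ and $\max(p^{1/2},N)=p^{1/2}$, giving $V\ll p^{1/8}N^{71/96+o(1)}$. For $t^{1/2}<N\le p^{1/2}$ one has $1+(N^2/t)^{1/24}\ll(N^2/t)^{1/24}$ and still $\max(p^{1/2},N)=p^{1/2}$, giving $V\ll p^{1/8}t^{-1/96}N^{73/96+o(1)}$. For $p^{1/2}<N<t$ one has $1+(N^2/t)^{1/24}\ll(N^2/t)^{1/24}$ and $\max(p^{1/2},N)=N$, giving $V\ll p^{1/4}t^{-1/96}N^{49/96+o(1)}$. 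These are the three asserted bounds, so the whole proof reduces to establishing \eqref{eq:plan-large-spectrum}, which is where I expect the real work (and the input from \cite{KoSh}) to lie.
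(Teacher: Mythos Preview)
Your reduction to \eqref{eq:plan-large-spectrum} is a genuine gap, and it is not what the method of \cite{KoSh} supplies. The argument of \cite{KoSh} (and of the present paper) is \emph{not} a large-spectrum argument at all: it never bounds the number of $a$ with $|S_{g,p}(a,N)|$ large. Instead it uses the shifting identity you begin with to write, for any $K\le N$,
\[
S_{g,p}(\lambda,N)=\frac1K\sum_{k=1}^K\sum_{n=1}^N e_p(\lambda g^{k+n})+O\bigl(\sigma_{g,p}(K)\bigr),
\qquad \sigma_{g,p}(K)=\max_{1\le M\le K}\max_{\gcd(a,p)=1}|S_{g,p}(a,M)|,
\]
and then bounds the bilinear sum $\frac1K\sum_k\sum_n e_p(\lambda g^k\cdot g^n)$ directly. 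Two routes are used: (i) Lemma~\ref{lem:Bound 1} applied with $\cA=\{g^n:n\le N\}$ and $\cB=\{\lambda g^k:k\le K\}$, together with Theorem~\ref{lem:Bound SN4} to control the additive energies, produces the factor $p^{1/8}$; (ii) H\"older in $k$ followed by extending the $k$-sum to all of $\F_p^*$ and invoking Theorem~\ref{lem:Bound SN4} produces the factor $p^{1/4}$. Taking $K=\lfloor N/3\rfloor$ and iterating gives the two bounds
\[
\max_{\gcd(\lambda,p)=1}|S_{g,p}(\lambda,N)|\le p^{1/8}N^{71/96+o(1)}\bigl(1+(N^2/t)^{1/96}\bigr),
\qquad
\max_{\gcd(\lambda,p)=1}|S_{g,p}(\lambda,N)|\le p^{1/4}N^{47/96+o(1)}\bigl(1+(N^2/t)^{1/96}\bigr),
\]
which combine to the three cases of the theorem. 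The point is that the saving of $p^{1/8}$ (respectively $p^{1/4}$) comes from the bilinear exponential-sum estimate applied to the \emph{double} sum in $k$ and $n$, not from any lower bound on a level set $\mathcal L$.

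Your claimed inequality $|\mathcal L|\gg\max(p^{1/2},N)N^{-o(1)}$ therefore has no source in \cite{KoSh}, and I do not see how to establish it. The dilation argument gives only $|\mathcal L|\gg V$, and you correctly note this is insufficient; but there is no further mechanism in the literature that would amplify $V$ up to $p^{1/2}$ or $N$ for an individual $\lambda$. If you want to salvage the large-spectrum viewpoint you would need a substantially new idea; the straightforward fix is to abandon \eqref{eq:plan-large-spectrum} and run the recursive bilinear argument above.
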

The following is a consequence of~\cite[Lemma 2]{Kor} and~\cite[Theorem~34]{Sk} 
\begin{theorem}
\label{sk}
For $g\in \F_p^*$ of order $t$ and integer $N\le t$, we have
$$\max_{\gcd(\lambda,p)=1}|S_{g,p}(\lambda,N)| \ll \begin{cases}  p^{1/8}t^{22/36}(\log{p})^{7/6}, \quad  \ \  \  \ t \le p^{1/2}, \\  
 p^{1/4}t^{13/36}(\log{p})^{7/6}, \quad \  \ p^{1/2}<t \le p^{3/5}(\log{p})^{-6/5}, \\ 
 p^{1/6}t^{1/2}(\log{p})^{4/3}, \quad \ \ \ \ \  p^{3/5}<t \le p^{2/3}(\log{p})^{-2/3}, \\
p^{1/2}\log{p}, \quad \quad \quad  \ \ \ \ \  \ \ \  \ \ t>p^{2/3}(\log{p})^{-2/3}. \end{cases}$$
\end{theorem}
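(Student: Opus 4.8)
The plan is to combine the completion technique of~\cite[Lemma 2]{Kor} with the additive-combinatorial estimates for multiplicative subgroups from~\cite[Theorem 34]{Sk}: the statement is essentially the outcome of substituting the latter into the former and optimising a free parameter, with a case split according to the size of $t$.

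\emph{Step 1 (completion).} Put $H=\langle g\rangle$, so $|H|=t$ and, since $N\le t$, the set $\{g^{n}:1\le n\le N\}$ is an arc of the cyclic group $H$. Expanding the indicator of this arc into additive characters modulo $t$ gives
$$S_{g,p}(\lambda,N)=\sum_{|a|\le t/2}c_a\sum_{x\in H}\psi_a(x)e_p(\lambda x),\qquad |c_a|\ll\min\!\left(\tfrac{N}{t},\tfrac{1}{|a|}\right),$$
where $\psi_a$ is the multiplicative character of $H$ with $\psi_a(g)=e^{2\pi i a/t}$; so $S_{g,p}(\lambda,N)$ is governed by complete sums over $H$, twisted by characters of $H$. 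Korobov's Lemma 2 is exactly the tool that converts this into a quantitative estimate: applying Hölder's inequality with a free even exponent $2k$, estimating the $c_a$ using $N\le t$, and expanding the $2k$-th power by orthogonality, one obtains a bound of the form
$$\max_{\gcd(\lambda,p)=1}\bigl|S_{g,p}(\lambda,N)\bigr|\ll_k\bigl(p\,J_k(H)\bigr)^{1/(2k)}\,(\log p)^{O(1)},$$
in which $J_k(H)$ is an energy-type count for $H$ — essentially the number of solutions of $x_1+\dots+x_k=y_1+\dots+y_k$ with $x_1\cdots x_k=y_1\cdots y_k$ and all variables in $H$ — and $k$ is still free. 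Taking instead the pointwise bound $\bigl|\sum_{x\in H}\psi(x)e_p(\lambda x)\bigr|\le p^{1/2}$ turns this into Korobov's classical $p^{1/2}\log p$.

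\emph{Step 2 (Shkredov's estimate).} Feed in~\cite[Theorem 34]{Sk}, which bounds $J_k(H)$ (equivalently, high moments of the twisted Gauss sums over $H$) for a multiplicative subgroup of medium size. Its estimate stays close to the ``diagonal'' value $t^{k}$, up to powers of $\log t$, while $t$ is small relative to $p^{2/k}$, and degrades beyond that, an off-diagonal term of size about $t^{2k}/p^{2}$ appearing. The competition between these two regimes, as $k$ varies, is what forces the final bound to change form at $t=p^{1/2}$, $t=p^{3/5}$ and $t=p^{2/3}$.

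\emph{Step 3 (optimisation and the four ranges).} Substituting Step 2 into Step 1 and optimising the integer $k$ separately over $t\le p^{1/2}$, over $p^{1/2}<t\le p^{3/5}$, and over $p^{3/5}<t\le p^{2/3}$ produces the first three estimates; each threshold is the point at which the optimal $k$ — equivalently, which term of Shkredov's estimate dominates — changes. For $t>p^{2/3}(\log p)^{-2/3}$ all of these already exceed $p^{1/2}$, so one simply invokes the classical bound $\max_{\gcd(\lambda,p)=1}|S_{g,p}(\lambda,N)|\ll p^{1/2}\log p$ of~\cite{Kor} (the $k\to\infty$ case of Step 1, using $\bigl|\sum_{x\in H}\psi(x)e_p(\lambda x)\bigr|\le p^{1/2}$, the restricted sum being an average of $(p-1)/t$ ordinary Gauss sums). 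Finally one records the logarithmic factors: they come from the powers of $\log t$ in~\cite[Theorem 34]{Sk} together with the completion, and after extracting the $2k$-th root give the exponents $7/6$ and $4/3$ in the three main cases. The analytic half (Step 1) is routine; the whole content rests on Steps 2--3 — pinning down exactly which subgroup energy the completion produces, reconciling it with the precise hypotheses and conclusion of~\cite[Theorem 34]{Sk}, and then choosing $k$ in each window so that the powers of $p$, of $t$, of $\log p$, and the three threshold exponents $\tfrac12,\tfrac35,\tfrac23$ all fall out simultaneously as stated.
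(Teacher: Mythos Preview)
Your outline has the right two ingredients --- completion plus Shkredov's energy bound --- but the mechanism you describe is not the one that actually produces the stated exponents, and the ``free parameter $k$'' with its optimisation is a fiction.  In the paper there is no family of exponents and no $J_k$ with a multiplicative side-condition $x_1\cdots x_k=y_1\cdots y_k$.  What~\cite[Theorem 34]{Sk} supplies, as quoted in the paper, is a bound on the ordinary additive energy
\[
\cE_{+}(\cA,\cA)\ll\begin{cases}t^{22/9}(\log p)^{2/3},& t\le p^{3/5}(\log p)^{-6/5},\\ t^{3}p^{-1/3}(\log p)^{4/3},& t> p^{3/5}(\log p)^{-6/5},\end{cases}
\]
for the subgroup $\cA=\langle g\rangle$.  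On the analytic side, completion together with~\cite[Lemma 3.14]{Sh} gives exactly two fixed inequalities (the paper packages them as Lemma~\ref{HBK1}):
\[
|S_{g,p}(\lambda,N)|\ll p^{1/8}\cE_{+}(\cA,\cA)^{1/4}\log t\quad\text{and}\quad |S_{g,p}(\lambda,N)|\ll p^{1/4}t^{-1/4}\cE_{+}(\cA,\cA)^{1/4}\log t.
\]
The four ranges are then obtained by straight substitution: the first bound with the first energy estimate gives the case $t\le p^{1/2}$; the second bound with the first energy estimate gives $p^{1/2}<t\le p^{3/5}(\log p)^{-6/5}$; the second bound with the second energy estimate gives $p^{3/5}<t\le p^{2/3}(\log p)^{-2/3}$; and Korobov's classical $p^{1/2}\log p$ covers the remaining range.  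So the threshold $p^{1/2}$ is where the two inequalities in Lemma~\ref{HBK1} cross, the threshold $p^{3/5}$ is the case split internal to Shkredov's theorem, and $p^{2/3}$ is where the third bound meets $p^{1/2}\log p$.  None of these comes from optimising an auxiliary $k$.

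In short, your Step~1 should terminate at exponent $4$ (giving $\cE_+^{1/4}$), your Step~2 should invoke Shkredov's theorem as an additive-energy bound and nothing more, and your Step~3 should be replaced by the four direct substitutions above.
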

We may combine Theorem~\ref{KoSh1} and Theorem~\ref{sk} into a single result for particular values of $t$. For instance, when $t$ has order $p^{1/2}$ we get
\begin{corollary}
Suppose $g \in \F_p^*$  has order $t$ with $p^{1/2}\ll t \ll p^{1/2}$. Then
$$\max_{\gcd(\lambda,p)=1}|S_{g,p}(\lambda,N)|\le \begin{cases} p^{1/8+o(1)}N^{71/96}, \quad N\le p^{1/4}, \\ p^{23/192+o(1)}N^{73/96}, \quad p^{1/4}< N \le p^{179/438}, \\ p^{31/72+o(1)}, \quad  p^{179/438}<N \ll p^{1/2}. \end{cases}$$
\end{corollary}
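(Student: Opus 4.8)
The plan is to read off the conclusion from Theorem~\ref{KoSh1} and Theorem~\ref{sk} by specializing both to the range $t \asymp p^{1/2}$ and then taking, for each $N$, the smaller of the two resulting upper bounds. Since $g$ has order $t$ with $p^{1/2} \ll t \ll p^{1/2}$, the admissible range for $N$ is $N \le t \ll p^{1/2}$, so only part of the case analysis in the two theorems is relevant, and all implied constants in $t \asymp p^{1/2}$ can be absorbed into the $o(1)$ exponents.

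First I would substitute $t \asymp p^{1/2}$ into Theorem~\ref{KoSh1}. Because $N \ll p^{1/2}$, the third case there never occurs; the threshold $N \le t^{1/2}$ becomes $N \le p^{1/4}$, giving $p^{1/8+o(1)} N^{71/96}$, and for $p^{1/4} < N \ll p^{1/2}$ the factor $t^{-1/96}$ becomes $p^{-1/192}$, so $p^{1/8} t^{-1/96} N^{73/96+o(1)} = p^{23/192+o(1)} N^{73/96}$ on using $1/8 - 1/192 = 23/192$. Next I would substitute $t \asymp p^{1/2}$ into Theorem~\ref{sk}: depending on the implied constants, $t$ falls into the first case ($t \le p^{1/2}$) or the second ($p^{1/2} < t \le p^{3/5}(\log p)^{-6/5}$), but $p^{1/8} t^{22/36} = p^{1/8 + 11/36 + o(1)} = p^{31/72 + o(1)}$ and $p^{1/4} t^{13/36} = p^{1/4 + 13/72 + o(1)} = p^{31/72 + o(1)}$ agree, so in either case Theorem~\ref{sk} yields the $N$-independent bound $p^{31/72+o(1)}$.

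It then remains to take the minimum. For $N \le p^{1/4}$ the bound $p^{1/8+o(1)} N^{71/96}$ is largest at $N = p^{1/4}$, where it equals $p^{119/384+o(1)} < p^{31/72+o(1)}$, so it is the better bound throughout that range. For $N > p^{1/4}$ one compares $p^{23/192+o(1)} N^{73/96}$ with $p^{31/72+o(1)}$: equality of the two exponents requires $N^{73/96} = p^{31/72 - 23/192} = p^{179/576}$, i.e. $N = p^{(179/576)(96/73)} = p^{179/438}$, and since $1/4 < 179/438 < 1/2$ this crossover lies strictly inside the range. Hence $p^{23/192+o(1)} N^{73/96}$ is the smaller bound for $p^{1/4} < N \le p^{179/438}$ and $p^{31/72+o(1)}$ is the smaller one for $p^{179/438} < N \ll p^{1/2}$; assembling the three pieces gives the stated estimate.

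There is essentially no obstacle: the corollary is a bookkeeping consequence of two results already proved, and the only point requiring mild care is the boundary value $t \asymp p^{1/2}$, where one must verify — as done above — that Theorem~\ref{sk} produces the same exponent $31/72$ on both sides of $t = p^{1/2}$, so that the implied constants in $p^{1/2} \ll t \ll p^{1/2}$ are harmless.
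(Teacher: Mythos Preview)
Your proposal is correct and follows exactly the approach the paper intends: the paper does not give a separate proof of this corollary but simply states it as the outcome of combining Theorem~\ref{KoSh1} and Theorem~\ref{sk} at $t\asymp p^{1/2}$, which is precisely what you carry out. Your arithmetic for the exponents $23/192$, $31/72$, and the crossover $179/438$ all check, and your observation that the two adjacent cases of Theorem~\ref{sk} coincide at $t=p^{1/2}$ correctly disposes of the only subtlety.
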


\section{Preliminary Results}
Given $\cA,\cB \subseteq \F_p$ we define
$$\cA+\cB=\{ a+b \ : \ a \in \cA, \  b \in \cB \}$$
and 
$$\frac{\cA}{\cB}=\{ ab^{-1} \ : \ a\in \cA, \ b \in \cB, \  b \neq 0 \}.$$
We follow the method of~\cite{BouGar} to generalise~\cite[Lemma~2.8]{BouGar}
\begin{lem}
\label{sum}
Suppose $g \in \F_p^{*}$ has multiplicative order $t$ and let $L_1,L_2,M$ be nonnegative integers with $1\leq M\leq t$. Let 
$$\cX\subseteq [L_1+1,L_1+M] \mand \cY\subseteq [L_2+1,L_2+M]
$$ 
be two sets of integers of cardinalities 
$$\# \cX =M\Delta_1 \mand \# \cY =M\Delta_2.
$$
Then for the sets
$$ \cA=\{g^x~:~x\in \cX \} \mand \cB=\{g^y~:~y\in \cY\}
$$
we have 
$$\#(\cA+\cB)\ge \min\left \{M^{9/8+o(1)}\Delta_1^{3/4}\Delta_2, 
t^{1/8}M^{7/8+o(1)}\Delta_1^{5/8}\Delta_2 \right \}.
$$
\end{lem}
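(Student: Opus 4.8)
The plan is to convert the sumset lower bound into an upper bound for additive energy and then to exploit the near‑multiplicative rigidity of $\cA$ and $\cB$. Write $E^{+}(\cA,\cB)$ for the number of quadruples $(a_1,a_2,b_1,b_2)\in\cA^2\times\cB^2$ with $a_1+b_1=a_2+b_2$. Applying the Cauchy--Schwarz inequality to the representation function $s\mapsto\#\{(a,b)\in\cA\times\cB:a+b=s\}$ gives
$$\#(\cA+\cB)\ \ge\ \frac{(\#\cA)^2(\#\cB)^2}{E^{+}(\cA,\cB)}\ =\ \frac{M^4\Delta_1^2\Delta_2^2}{E^{+}(\cA,\cB)},$$
so it suffices to prove
$$E^{+}(\cA,\cB)\ \ll\ M^{o(1)}\Bigl(M^{23/8}\Delta_1^{5/4}\Delta_2+t^{-1/8}M^{25/8}\Delta_1^{11/8}\Delta_2\Bigr),$$
the two terms on the right producing the two entries of the minimum after division.

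Next I would use the key structural fact that, since $\cX$ and $\cY$ lie in intervals of length $M\le t$, the ratio sets $\cA/\cA$ and $\cB/\cB$ are contained in $\{g^{h}:\ |h|<M\}$ and hence have cardinality at most $2M$; equivalently, $\cA$ and $\cB$ have multiplicative energy at least $(\#\cA)^4/(2M)$ and $(\#\cB)^4/(2M)$ respectively, while when $M$ is close to $t$ one instead uses that $\cA$ and $\cB$ are close to the full subgroup $\langle g\rangle$ of order $t$. To bound $E^{+}(\cA,\cB)$ I would feed this multiplicative structure into a sum--product estimate: writing $a_i=g^{x_i}$, $b_j=g^{y_j}$ and dividing the defining equation by $g^{y_1}$, the solutions are governed by the number of solutions of $g^{\alpha}-g^{\beta}-g^{\gamma}+1=0$ with $\alpha,\beta,\gamma$ ranging over short intervals, each weighted by its number of realisations by admissible $(x_1,x_2,y_1,y_2)$. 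Estimating this weighted count is exactly where one inserts the recent additive‑combinatorial bounds of Shkredov for sets with small product set (and, in the regime $M\approx t$, the sumset estimates for the subgroup $\langle g\rangle$), in place of the incidence bound used in \cite[Lemma~2.8]{BouGar}; the $o(1)$ absorbs the logarithmic losses from the dyadic pigeonholing needed to run such estimates.

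Finally I would reassemble the pieces: the generic sum--product input applied to the multiplicatively rigid set $\cA$ yields the term $M^{23/8+o(1)}\Delta_1^{5/4}\Delta_2$, whereas the refinement that uses that $\cA$ additionally sits inside the order‑$t$ subgroup produces the saving $t^{-1/8}$ and the term $t^{-1/8}M^{25/8+o(1)}\Delta_1^{11/8}\Delta_2$; substituting the resulting bound for $E^{+}(\cA,\cB)$ into the Cauchy--Schwarz inequality above gives the claimed estimate for $\#(\cA+\cB)$, the two cases of the minimum corresponding to which of the two terms dominates $E^{+}(\cA,\cB)$, and all constants and factors of $2$ being swallowed by the $o(1)$.

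I expect the main obstacle to be the asymmetry of the bound together with the interpolation between regimes. Because the energy bound is linear in $\Delta_2$ but carries fractional powers of $\Delta_1$, one cannot symmetrise $E^{+}(\cA,\cB)\le E^{+}(\cA)^{1/2}E^{+}(\cB)^{1/2}$ without losing the correct exponents, so the weighted count of solutions to $g^{\alpha}-g^{\beta}-g^{\gamma}+1=0$ must be run asymmetrically, tracking $\#\cX$ and $\#\cY$ separately through the sum--product and Plünnecke--Ruzsa type manipulations; and one must patch together the ``thin geometric progression'' regime $M\ll t$ with the ``full subgroup'' regime $M\approx t$ so that the transition is continuous, which is precisely the source of the factor $t^{1/8}$ and, ultimately, of the case distinctions in Theorem~\ref{KoSh1} and the subsequent corollary.
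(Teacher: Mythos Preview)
Your proposal is a plan rather than a proof, and the central step---the claimed bound
\[
E^{+}(\cA,\cB)\ \ll\ M^{o(1)}\bigl(M^{23/8}\Delta_1^{5/4}\Delta_2+t^{-1/8}M^{25/8}\Delta_1^{11/8}\Delta_2\bigr)
\]
---is never established. You gesture at ``Shkredov's bounds for sets with small product set'' but do not name a specific result, and in fact the Shkredov input~\cite[Theorem~34]{Sk} used in the paper concerns the additive energy of the \emph{full} subgroup $\langle g\rangle$, not of an arbitrary subset with small multiplicative doubling; it is invoked only for Theorem~\ref{sk}, not for this lemma. The reduction to the equation $g^{\alpha}-g^{\beta}-g^{\gamma}+1=0$ is also not clearly workable: dividing $a_1+b_1=a_2+b_2$ by $b_1$ produces variables in ranges of length up to $2M$ with nontrivial multiplicities, and you have not said how to control those multiplicities asymmetrically to recover the exponents $5/4$ in $\Delta_1$ and $1$ in $\Delta_2$. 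In short, the Cauchy--Schwarz step is fine, but everything after it is aspiration rather than argument.

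The paper does not go through additive energy at all. It follows~\cite[Lemma~2.8]{BouGar} directly: a dyadic pigeonholing on $\#(a\cB\cap a_0\cB)$ extracts a subset $\cA_1\subseteq\cA$ and a scale $N$ with $N\#\cA_1\gg M^2\Delta_1\Delta_2^2/\log M$; the Pl\"unnecke--Ruzsa inequality~\cite[Lemma~2.6]{BouGar} then gives $\#(a\cA\pm a_0\cA)\le(\#(\cA+\cB))^2/N$ for $a\in\cA_1$; a second pigeonholing produces $\cA_2\subseteq\cA_1$, and the proof splits according to whether $\#\bigl((\cA_1-\cA_1)/(\cA_1-\cA_1)\bigr)$ is smaller or larger than the multiplicative order of a suitable ratio $a_0''/a_0'$. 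The two cases are closed using the Glibichuk--Konyagin lemmas~\cite[Lemmas~3.1 and~3.3]{GlibKon}, together with~\cite[Lemma~2.7]{BouGar} to pass from $\#((a_1-a_2)\cA+(a_3-a_4)\cA)$ back to $(\#(\cA+\cB))^8$. The factor $t^{1/8}$ arises in Case~2 from the divisor bound $\tau(t)=t^{o(1)}$ and the inequality $\mathrm{ord}_p(a_0''/a_0')\ge t/M$, not from any ``full subgroup'' regime as you suggest. If you want to salvage your route you would need either to locate an off-the-shelf asymmetric energy bound matching those exact exponents (I do not know one) or to rederive it, which in practice means reproducing the Bourgain--Garaev/Glibichuk--Konyagin machinery anyway.
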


\begin{proof}
We follow the proof of~\cite[Lemma~2.8]{BouGar} and begin by considering the sum
$$\sum_{a_1, a_2 \in \cA}\#\(a_1\cB \cap a_2 \cB\)= 
\# \{ (a_1,a_2,b_1,b_2)\in \cA \times \cA \times \cB \times \cB~:~a_1b_1=a_2b_2 \}.
$$
By~\cite[Lemma~2.9]{TaoVu} and the Cauchy-Schwarz inequality we get 
$$\sum_{a_1\,a_2 \in \cA}\#\(a_1\cB \cap a_2 \cB\)\ge 
\frac{\(\# \cA\)^2\(\# \cB\)^2}{\#(\cA \cB)}$$
hence there exists some fixed $a_0\in \cA$ such that
$$\sum_{a\in \cA}\#\(a\cB \cap a_0 \cB\)\ge \frac{\#\cA\(\# \cB\)^2}{\#(\cA \cB)}.$$
Using an argument from~\cite[Theorem~1]{Gar}, for positive integer $j\le \log\#\cB/\log{2}+1$, let $\cD_j$ be the set of all $a\in \cA$ such that
$$2^{j-1}\le \#\(a\cB \cap a_0 \cB\)< 2^{j}$$
and set $\cD_j=\emptyset$ otherwise.
Then we have
$$\sum_{j}\sum_{a\in \cD_j}2^j \ge \sum_{a\in \cA}\#\(a\cB \cap a_0 \cB\)\ge
 \frac{\#\cA\(\# \cB\)^2}{\#(\cA \cB)}.$$
We choose $j_0$ so that  $\sum_{a\in \cD_{j}}2^j$ is maximum for $j=j_0$ and let
\begin{equation}
\label{A_1 N def}
N=2^{j_0-1}, \qquad \cA_1=\cD_{j_0}\subseteq \cA,
\end{equation}
so that 
\begin{equation}
\label{B and a}
 N\leq   \#\(a\cB \cap a_0 \cB\) \leq 2N .
\end{equation}
We have
$$\left(\log\#\cB/\log{2}+1\right)\sum_{a\in \cD_{j_0}}2^{j_0-1} \ge 
\sum_{j}\sum_{a\in \cD_j}2^j \ge \frac{\# \cA \(\# \cB\)^2}{\#(\cA \cB)}$$
and the inequality $\# \cB\leq M$ gives
\begin{equation}
\label{A_1 bound}
N\#\cA_1\ge \frac{\# \cA\(\# \cB\)^2}{4\#(\cA \cB)\log{M}}.
\end{equation}

Since $1\leq M\leq t$, for any $x_1,x_2 \in \cX$ we have $x_1 \not\equiv x_2 \pmod t$ so that $g^{x_1}\not \equiv g^{x_2} \pmod p$, hence we get
\begin{equation}
\label{A bound}
\#\cA= M \Delta_1,
\end{equation}
\begin{equation}
\label{BB bound}
\#\cB = M \Delta_2,
\end{equation}
and
\begin{equation}
\label{AB bound}
\#(\cA \cB)= \# \{g^{x+y}~:~x\in \cX, y \in \cY \} \ll M.
\end{equation}
Inserting~\eqref{A bound},~\eqref{BB bound},~\eqref{AB bound} into~\eqref{A_1 bound} and recalling that $N\leq \# \cB$ and $\#\cA_1\leq \#\cA$ gives
\begin{equation}
\label{AN}
N\#\cA_1\gg M^2\frac{\Delta_1 \Delta_2^2}{\log{M}},
\end{equation}
\begin{equation}
\label{A_1 ub}
\#\cA_1\gg M\frac{\Delta_1\Delta_2}{\log{M}},
\end{equation}
\begin{equation}
\label{N ub}
N\gg M\frac{\Delta_2^2}{\log{M}}.
\end{equation}
By~\cite[Lemma~2.6]{BouGar} we have
$$\#(a\cA\pm a_0 \cA)\le \frac{\#(a\cA+(a\cB\cap a_0\cB))\#(a_0\cA+(a\cB\cap a_0\cB))}{\#(a\cB\cap a_0\cB))}\leq \frac{(\#(\cA+\cB))^2}{\#(a\cB \cap a_0\cB)},$$
so that for any $a \in \cA_1$, by~\eqref{B and a} 
\begin{equation}
\label{A+B}
\#(a\cA\pm a_0 \cA)\le \frac{(\#(\cA+\cB))^2}{N}.
\end{equation}
Using the same argument from the beginning of the proof, there exists $a_0' \in \cA_1$ such that
\begin{equation}
\label{aaaaa}
\sum_{a\in \cA_1}\#(a\cA_1 \cap a_0' \cA_1)\ge \frac{\(\#\cA_1\)^3}{\#\(\cA_1 \cA_1\)}.
\end{equation}
Let $\cA_2$ be the set of all $a \in \cA_1$ such that
\begin{equation}
\label{A_2}
\#((a/a_0')\cA_1\cap \cA_1) \ge \frac{\(\#\cA_1\)^2}{2\#\(\cA_1 \cA_1\)}.
\end{equation}
Then we have
\begin{equation}
\label{A_2 bound} 
\#\cA_2\ge \frac{\(\#\cA_1\)^2}{2\#\(\cA_1 \cA_1\)},
\end{equation}
since if the inequality~\eqref{A_2 bound} were false, we would have
\begin{align*}
\sum_{a\in \cA_1}\#(a\cA_1 \cap a_0' \cA_1)&=
\sum_{a\in \cA_2}\#(a\cA_1 \cap a_0' \cA_1)+\sum_{a\in \cA_1 \setminus \cA_2}\#(a\cA_1 \cap a_0' \cA_1)  \\
&\leq \#\cA_2\#\cA_1+\#\cA_1\frac{\(\#\cA_1\)^2}{2\#\(\cA_1 \cA_1\)} \\
& < \frac{\(\#\cA_1\)^3}{\#\(\cA_1 \cA_1\)}\left(\frac{1}{2}+\frac{1}{2}\right)=\frac{\(\#\cA_1\)^3}{\#\(\cA_1 \cA_1\)},
\end{align*}
which contradicts~\eqref{aaaaa}.
Let 
$$d_0=\max\{\ordp(a/a_0')~:~a \in \cA_2 \}=\ordp(a_0''/a_0')$$
for some $a_0'' \in \cA_2$.
We split the remaining proof into 2 cases:
 
{\it Case 1:\/}
$$\#\(\frac{\cA_1-\cA_1}{\cA_1-\cA_1}\)<\ordp(a_0''/a_0')$$
Let 
$$\cC=(a_0''/a_0)\cA_1 \cap \cA_1$$
then we have  
\begin{equation}
\label{C A_1 bound}
\#\(\frac{\cC-\cC}{\cA_1-\cA_1}\) \leq \#\(\frac{\cA_1-\cA_1}{\cA_1-\cA_1}\)<\ordp(a_0''/a_0')
\end{equation}
so there exists $c_1,c_2 \in \cC$ and $a_3, a_4 \in \cA_1$ such that $$(a_0'/a_0'')\frac{c_1-c_2}{a_3-a_4}\not\in  \frac{\cC-\cC}{\cA_1-\cA_1}.$$
Since if $(a_0'/a_0'')y \in  \frac{\cC-\cC}{\cA_1-\cA_1}$ for all $y \in \frac{\cC-\cC}{\cA_1-\cA_1}$ 
then the distinct elements 
$$y, (a_0'/a_0'')y,\ldots , (a_0'/a_0'')^{\ordp(a_0''/a_0')-1}y$$
all belong to $\frac{\cC-\cC}{\cA_1-\cA_1}$, contradicting~\eqref{C A_1 bound}. Using a similar argument, we may show  that we have strict subset inclusion $\cC \subset \cA_1$ so that we may choose $a_1, a_2 \in \cA_1$ such that 
$$\frac{a_1-a_2}{a_3-a_4}\not\in \frac{\cC-\cC}{\cA_1-\cA_1}.$$
Hence by~\cite[Lemma~3.1]{GlibKon} we have
$$\#((a_1-a_2)\cA+(a_3-a_4)\cA) \ge \# \left(\cC+\frac{a_1-a_2}{a_3-a_4}\cA_1\right)\ge \#\cA_1\#\cC$$
and since $a_0'' \in \cA_2$, we have by~\eqref{A_2}
\begin{equation}
\label{bound case 1}
\#((a_1-a_2)\cA+(a_3-a_4)\cA) \ge \frac{\(\#\cA_1\)^3}{\#\(\cA_1 \cA_1\)}.
\end{equation}
In~\cite[Lemma~2.7]{BouGar}  we take $k=4$ and 
$$B_1=a_1\cA, \quad B_2=-a_2\cA, \quad B_3=a_3\cA, \quad B_4=-a_4\cA, \quad  X=a_0\cA,$$
which gives
\begin{align}
\label{long inequality}
\#(a_1\cA-a_2\cA+a_3\cA &-a_4\cA)\leq \nonumber \\  &\frac{\#(a_0\cA+a_1\cA)\#(a_0\cA-a_2\cA)\#(a_0\cA+a_3\cA)\#(a_0\cA-a_4\cA)}{(\#\cA)^3}.
\end{align}
The inequality $\#\((a_1-a_2)\cA+(a_3-a_4)\cA\)\le\#\(a_1\cA-a_2\cA+a_3\cA-a_4\cA\)$ along with~\eqref{A+B} and~\eqref{bound case 1} gives
$$\(\#(\cA+\cB)\)^8\ge \frac{((\#\cA_1)^3(\#\cA)^3N^4}{\#\(\cA_1 \cA_1\)}.$$
Inserting~\eqref{A bound},~\eqref{AN} and~\eqref{N ub} into the above and using $\#\(\cA_1 \cA_1\)\ll M$ we get
\begin{equation}
\label{final 1}
\(\#(\cA+\cB)\)^8\ge M^{9+o(1)}\Delta_1^6\Delta_2^8.
\end{equation}

{\it Case 2:}
$$\#\(\frac{\cA_1-\cA_1}{\cA_1-\cA_1}\)\geq \ordp(a_0''/a_0')$$
 Then we have $M^4\ge \ordp(a_0''/a_0')$ and  writing  $a_0''=g^{x_0''}$ and $a_0'=g^{x_0}$, we have
$$\ordp(a_0''/a_0')=\frac{t}{\gcd(t,x_0''-x_0')}$$
 and since $1 \leq  |x_0''-x_0'| \leq M$ we get  $\ordp(a_0''/a_0')\gg M/t$. Combining this with the previous inequality gives $M\ge t^{1/5}$. We may suppose $\Delta_1 \Delta_2 \ge M^{-1/5}$ since otherwise the bound is trivial, so that~\eqref{A_1 ub} and~\eqref{A_2 bound} give
\begin{equation}
\label{A_2 ub}
\# \cA_2\gg M\frac{\Delta_1^2 \Delta_2^2}{\log^2{M}} \gg M^{3/5}  \gg 
t^{1/20}.
\end{equation}
Since $\cA_2 \subseteq \{ g^x~:~L_0+1\leq x \leq L_0+M \}$, we have
\begin{align*}
\#\cA_2=\sum_{a\in \cA_2}1\leq \sum_{\substack{a\in \cA_2 \\ \ordp(a/a_0')\leq d_0}}1& \le
\sum_{\substack{d \mid t \\ d\leq d_0}}\sum_{\substack{L_1+1\leq x \leq L_1+M \\ t \mid  dx}}1
\leq \left(\frac{Md_0}{t}+1\right)\tau(t)
\end{align*}
with $\tau(t)$ counting the number of divisors of $t$. By~\eqref{A_2 ub} and  the bound $\tau(t)\ll t^{o(1)}$
~\cite[Theorem~315]{HardyWright}
we obtain  $1 \ll |A_2|/\tau(t)$ and hence
\begin{equation}
\label{d_0 ub}
d_0\geq \frac{t}{M}\left(\frac{\#\cA_2}{\tau(t)}-1\right) \gg \frac{t \#\cA_2}{\tau(t)}M.
\end{equation}
By assumption on $d_0$ and~\eqref{A_2 bound} we have
$$
\#\( \frac{\cA_1-\cA_1}{\cA_1-\cA_1}\) \gg \frac{t\(\#\cA_1\)^2}{\tau(t)M^2}.
$$
Taking $G=\cA_1-\cA_1/\cA_1-\cA_1$ in~\cite[Lemma~3.3]{GlibKon} we see that there exists $\lambda \in (\cA_1-\cA_1)/(\cA_1-\cA_1)$ such that
$$\#\(\cA+\lambda \cA\) \ge  \#\(\cA_1+\lambda \cA_1\)
\ge \min\left \{ \(\#\cA_1\)^2, \frac{t\(\#\cA_1\)^2}{\tau(t)}M^2 \right \}.
$$
Hence there exist $a_1,a_2,a_3,a_4\in \cA_1$ such that 
$$\#\((a_1-a_2)\cA+(a_3-a_4)\cA\)\gg \(\#\cA_1\)^2$$
or
$$\#\((a_1-a_2)\cA+(a_3-a_4)\cA\)\gg \frac{t\(\#\cA_1\)^2}{\tau(t)}M^2.$$
For the first case, by~\eqref{A+B} and~\eqref{long inequality}
\begin{align*}
\(\#\cA_1\)^2 &\leq \frac{\#\( a_0\cA+a_1\cA\)\#\(a_0\cA-a_2\cA\)
\#\(a_0\cA+a_3\cA\)\#\(a_0\cA-a_4\cA\)}{\(\#\cA_1\)^3} \\
&\leq \frac{\(\#(\cA+\cB)\)^8}{( \#\cA)^3N^3}
\end{align*}
 and by~\eqref{A bound}, ~\eqref{AN} and~\eqref{N ub} we get
\begin{equation}
\label{final 2}
(\#(\cA+\cB))^8\gg M^{9+o(1)}\Delta_1^5\Delta_2^8\gg M^{9+o(1)}\Delta_1^6\Delta_2^8
\end{equation}
similarily for the second case, we get
$$\(\#(\cA+\cB)\)^8\gg \frac{t}{\tau(t)} M^{7+o(1)}\Delta_1^5\Delta_2^8$$
and recalling that $M\ge t^{1/5}$ and $\tau(t)\ll t^{o(1)}$, we may absorb the term $1/\tau(t)$ into
$M^{o(1)}$, which gives 
\begin{align}
\label{final 3}
\(\#(\cA+\cB)\)^8\gg tM^{7+o(1)}\Delta_1^5\Delta_2^8
\end{align}
and the result follows combining~\eqref{final 1},~\eqref{final 2} and~\eqref{final 3}. 
\end{proof}
Given $\cA,\cB \subset \F_p$, we write
$$\cE_{+}(\cA,\cB)=\# \{ (a_1,a_2,b_1,b_2) \in \cA^2 \times \cB^2 : a_1+b_1=a_2+b_2 \}.$$
Then we have~\cite[Lemma 7.1]{BouGar} 
\begin{lemma}
\label{lem:Bound 1}
Let $\cA, \cB \subset \F_p$, then
$$\left|\displaystyle\sum_{a\in \cA}\displaystyle\sum_{b\in \cB}e_p(xy)\right|^8 \le p(\# \cA)^4(\# \cB)^4\cE_{+}(\cA,\cA)\cE_{+}(\cB,\cB)$$
\end{lemma}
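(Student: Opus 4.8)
The plan is to deduce the bound from three successive applications of the Cauchy--Schwarz inequality, interleaved with the orthogonality relations modulo $p$. Write $T=\sum_{a\in\cA}\sum_{b\in\cB}e_p(ab)$ for the quantity whose eighth power is to be estimated; for $s\in\F_p$ set $h(s)=\sum_{b\in\cB}e_p(bs)$, and let $r(s)=\#\{(a_1,a_2)\in\cA^2:a_1-a_2=s\}$ be the representation function of the difference set of $\cA$. Record at the outset the three identities $\sum_{s}r(s)=(\#\cA)^2$, $\sum_{s}r(s)^2=\cE_{+}(\cA,\cA)$ (after the substitution $a_1-a_2=a_3-a_4$ the two counts of quadruples coincide), and $\sum_{s\bmod p}|h(s)|^4=p\,\cE_{+}(\cB,\cB)$ (expand $|h(s)|^4$ and sum the geometric series in $s$).

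First I would separate the summation over $\cB$: by Cauchy--Schwarz and then grouping the inner double sum by the value of $a_1-a_2$,
$$|T|^2\le \#\cB\sum_{b\in\cB}\Bigl|\sum_{a\in\cA}e_p(ab)\Bigr|^2=\#\cB\sum_{b\in\cB}\sum_{a_1,a_2\in\cA}e_p((a_1-a_2)b)=\#\cB\sum_{s}r(s)h(s),$$
the last quantity being a nonnegative real. A second Cauchy--Schwarz, now weighting by $r(s)$, gives
$$|T|^4\le(\#\cB)^2\Bigl(\sum_s r(s)\Bigr)\Bigl(\sum_s r(s)|h(s)|^2\Bigr)=(\#\cA)^2(\#\cB)^2\sum_s r(s)|h(s)|^2.$$
A third Cauchy--Schwarz on the remaining sum yields $\sum_s r(s)|h(s)|^2\le\bigl(\sum_s r(s)^2\bigr)^{1/2}\bigl(\sum_s|h(s)|^4\bigr)^{1/2}$, and enlarging the range of the second factor from the difference set of $\cA$ to all residues modulo $p$ (which only increases it, since every summand is nonnegative) and inserting the three identities above produces
$$|T|^8\le(\#\cA)^4(\#\cB)^4\,\cE_{+}(\cA,\cA)\cdot p\,\cE_{+}(\cB,\cB),$$
which is the asserted inequality.

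There is no essential difficulty here; the only points demanding care are bookkeeping ones. One must see in advance that the two additive energies should enter asymmetrically in the argument even though symmetrically in the conclusion: $\cE_{+}(\cA,\cA)$ surfaces as the ``diagonal'' quantity $\sum_s r(s)^2$, while $\cE_{+}(\cB,\cB)$ surfaces as the normalised fourth moment $p^{-1}\sum_{s\bmod p}|h(s)|^4$. One must also remember to restore the full range of summation modulo $p$ before invoking orthogonality, which is legitimate precisely because at that stage every term is real and nonnegative. Finally, the roles of $\cA$ and $\cB$ may be interchanged at the very first step, in accordance with the symmetry of the right-hand side, so no generality is lost by the choice made above.
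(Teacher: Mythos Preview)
Your argument is correct. The paper does not supply its own proof of this lemma but merely cites \cite[Lemma~7.1]{BouGar}; the three-step Cauchy--Schwarz argument you give, combined with orthogonality to identify $\sum_{s\in\F_p}|h(s)|^4=p\,\cE_{+}(\cB,\cB)$, is the standard way of establishing this inequality and is essentially what underlies the cited result.
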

\begin{lemma}
\label{HBK1}
Suppose $g\in \F_p^*$ has order $t$ and let $\cA \subset F_p^*$ be the subgroup generated by $g$. Then for $N\leq t$ we have

\begin{equation*}
\label{bbb1}
\max_{\gcd(\lambda,p)=1}|S_{g,p}(\lambda,N)|\ll \begin{cases} p^{1/8}\cE_{+}(\cA,\cA)^{1/4}\log{t} \\ p^{1/4}t^{-1/4}\cE_{+}(\cA,\cA)^{1/4}\log{t}.
\end{cases}
\end{equation*}
\end{lemma}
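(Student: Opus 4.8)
The plan is to reduce the incomplete sum $S_{g,p}(\lambda,N)$ to complete sums over $\cA$ by the completion technique of~\cite{Sh} (in the form of~\cite[Lemma~2]{Kor}), which costs only a factor $\log t$, and then to bound the resulting complete sums via Lemma~\ref{lem:Bound 1} and a fourth-moment argument.

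First I would establish the two complete-sum estimates, in a form allowing an arbitrary multiplicative character $\psi$ of $\cA$: for $\gcd(\mu,p)=1$ put $W(\mu,\psi)=\sum_{a\in\cA}\psi(a)e_p(\mu a)$; I claim $$|W(\mu,\psi)|\le p^{1/8}\cE_{+}(\cA,\cA)^{1/4}\mand |W(\mu,\psi)|\le p^{1/4}t^{-1/4}\cE_{+}(\cA,\cA)^{1/4}.$$ For the first, since $\cA$ is a group the substitution $a\mapsto bc$ gives $t\,W(\mu,\psi)=\sum_{b\in\cA}\sum_{c\in\cA}\psi(b)\psi(c)\,e_p(\mu bc)$, and applying Lemma~\ref{lem:Bound 1} (whose proof applies equally with unimodular weights on the two sets) to the pair $\mu\cA$ and $\cA$ yields $|t\,W(\mu,\psi)|^{8}\le p\,t^{4}\,t^{4}\,\cE_{+}(\cA,\cA)^{2}$. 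For the second, expanding the fourth moment and summing over $\mu\in\F_p$ gives $\sum_{\mu\in\F_p}|W(\mu,\psi)|^{4}=p\sum_{a_1+a_2=a_3+a_4}\psi(a_1a_2)\overline{\psi(a_3a_4)}\le p\,\cE_{+}(\cA,\cA)$; since $|W(\mu,\psi)|$ depends only on the coset $\mu\cA$ (replace $a$ by $a_0a$ for $a_0\in\cA$), of which there are $(p-1)/t$ nonzero ones, one obtains $t\max_{\mu}|W(\mu,\psi)|^{4}\le p\,\cE_{+}(\cA,\cA)$.

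Finally I would carry out the reduction. Viewing $n\mapsto e_p(\lambda g^n)$ as a function on $\Z/t\Z$ and expanding $\mathbf{1}_{[1,N]}$ into the additive characters $e_t(jn)$, whose Fourier coefficients have $\ell^1$-norm $\ll\log t$, one obtains $|S_{g,p}(\lambda,N)|\ll\log t\cdot\max_{0\le j<t}\bigl|\sum_{n=1}^{t}e_p(\lambda g^n)e_t(jn)\bigr|=\log t\cdot\max_{j}|W(\lambda,\psi_j)|$, where $\psi_j$ is the character of $\cA$ with $\psi_j(g)=e_t(j)$; combining with the two estimates above finishes the proof. The step needing the most care is precisely the appearance of the multiplicative twist $\psi_j$ in the completion: one needs the complete-sum bounds in twisted form, which is why I would prove them with a general $\psi$ from the outset. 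Both underlying arguments — via Lemma~\ref{lem:Bound 1} and via the fourth moment — are insensitive to unimodular twists, so this is routine, but it is the one place where merely quoting an untwisted complete-sum estimate would not suffice.
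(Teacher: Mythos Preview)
Your proof is correct and follows essentially the same route as the paper: completion via additive characters modulo $t$ (costing the $\log t$ factor), then bounding the resulting twisted complete sums $\sigma(k,\lambda)=W(\lambda,\psi_k)$. The only difference is cosmetic: the paper quotes~\cite[Lemma~3.14]{Sh} as a black box for the two estimates on $\sigma(k,\lambda)$, whereas you supply direct proofs of those estimates (via a weighted form of Lemma~\ref{lem:Bound 1} and the fourth-moment/coset-invariance argument), which is exactly the content of the cited lemma.
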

\begin{proof}
Let
$$\sigma(a,c)=\sum_{n=1}^{t}e_t(an)e_p(cg^n)$$
then we have
$$S_{g,p}(\lambda,N)=\displaystyle\sum_{n=1}^{N}e_p(\lambda g^n)=\frac{1}{t}\displaystyle\sum_{k=1}^{t}\displaystyle\sum_{j=1}^{N}e_t(-kj)\displaystyle\sum_{n=0}^{t-1}e_t(kn)e_p(\lambda g^{n})$$
so that
\begin{align}
\label{CI}
|S_{g,p}(\lambda,N)|&\leq \frac{1}{t}\sum_{k=1}^{t}\left|\sum_{j=1}^{N}e_t(-kj)\right|\max_{k\in \F_p} \left|\sigma(k,\lambda)\right| \nonumber \\
&\ll \log{t}\max_{k\in \F_p} \left|\sigma(k,\lambda)\right|.
\end{align}
By~\cite[Lemma 3.14]{Sh} for any integers $k,\lambda,$ with $\gcd(\lambda,p)=1$, we have
$$|\sigma(k,\lambda)|\le p^{1/4}t^{-1/4}\cE_{+}(\cA,\cA)^{1/4},$$
$$|\sigma(k,\lambda)|\le p^{1/8}\cE_{+}(\cA,\cA)^{1/4}$$
and the result follows combining  these bounds with~\eqref{CI}. 
\end{proof}
\section{Proof of Theorem 1}

Let $J(g,N)$ equal the number of solutions to the equation
$$g^{x_1}+g^{x_2}=g^{x_3}+g^{x_4}, \qquad 1\leq x_1,x_2,x_3,x_4 \leq N,$$
then we have
\begin{equation}
\label{J}
\sum_{\lambda \in  \F_p^*}\left|S_{\lambda}(p;g,N)\right|^4 \leq 
\sum_{\lambda \in  \F_p}\left|S_{\lambda}(p;g,N)\right|^4=pJ(g,N).
\end{equation}
 Given $\cA,\cB \subseteq \F_p$ and $\cE_0 \subset \cA \times \cB$ we write
$$\cA{+}_{\cE_0} \cB=\{ a+b : (a,b)\in \cE_0 \}$$
so that by ~\cite[Lemma~2.4]{BouGar}  there exists $\cE_0\subseteq \cA\times \cA$ such that
 $$\cE_{+}(\cA,\cA)\leq \frac{8(\#\cE_0)^2}{\#(\cA {+}_{\cE_0} \cA)}\log^2(e\# \cA)$$
and writing $K=N^2/\# \cE_0$ gives
\begin{equation}
\label{mult energy}
J(g,N) \leq \frac{N^{4+o(1)}}{\#\(\cA \mathop{+}_{\cE_0} \cA\) K^2}.
\end{equation}
Since $N\leq t$ we have $\#\cA =N$ so that $\# \cE_0= (\#\cA)^2/K$.  Hence by~\cite[Lemma~2.3]{BouGar}
there exists $\cA_1,\cA_2\subseteq \cA$ and integer $Q$ with 
\begin{equation}
\label{subsets}
\#\cA_1 \gg \frac{N}{K}, \qquad \#\cA_2 \gg \frac{N^2}{QK^2\log{N}},
\end{equation}
such that
\begin{equation}
\label{2.3}
\(\#\(\cA {+}_{\cE_0} \cA\)\)^3\gg \#\(\cA_1+\cA_2\)\frac{QN}{K^3\log{N}}.
\end{equation}
By~\eqref{subsets} and Lemma \ref{sum} we have
\begin{equation}
\begin{split}
\label{almost finished 11}
\#\(\cA_1+\cA_2\) &> \min\left \{N^{9/8+o(1)}\frac{1}{K^{3/4}}\frac{N}{QK^2}, t^{1/8}N^{7/8+o(1)}\frac{1}{K^{5/8}}\frac{N}{QK^2} \right \} \\
&\ge \frac{t^{1/8}N^{4+o(1)}}{t^2K^{5+3/8}}\frac{QK^{2+3/4}}{N^{1+7/8+o(1)}}\left(\frac{1}{N^{2/8+o(1)}+t^{1/8}K^{1/8}}\right)  \\
& \ge \frac{t^{1/8}N^{3-7/8+o(1)}}{QK^{19/8}}\left(\frac{1}{N^{2/8+o(1)}+t^{1/8}K^{1/8}}\right)
\end{split}
\end{equation} 
and from~\eqref{2.3} and~\eqref{almost finished 11} we get
\begin{equation}
\label{last sub}
\(\#\(\cA{+}_{\cE_0} \cA\)\)^{-1} < \frac{K^{36/24}}{N^{1+1/24+o(1)}}
\left ( \left(\frac{N^2}{Kt}\right)^{1/24}+1 \right).
\end{equation}
Combining~\eqref{mult energy} with~\eqref{last sub} gives
$$J(g,N)<K^{36/24-2}N ^{3-1/24}\left(  \left(\frac{N ^2}{Kt}\right)^{1/24}+1 \right)<N ^{3-1/24}\left(  \left(\frac{N ^2}{t}\right)^{1/24}+1 \right)$$
and since $K\ge 1$ the result follows.  
\section{Proof of Theorem 2}
We follow the method of \cite{KoSh} and begin with considering
$$\sigma_{p,g}(N)=\max_{1\leq K \leq N}\max_{\gcd(\lambda,p)=1} |S_{p,g}(\lambda,K)|$$
so that for any integer $K$ we have
$$\left|S_{p,g}(\lambda,N)-\frac{1}{K}\displaystyle\sum_{k=1}^{K}\displaystyle\sum_{n=1}^{N}e_p{(\lambda g^{k+n})}\right|\leq 2\sigma_{g,p}(K).$$
 Taking  $\cA=\{ g^{n} : 1\leq n \leq N \}$, $\cB=\{ \lambda g^{n} : 1\leq n \leq K \}$ in Lemma~\ref{lem:Bound 1}, we have by Theorem~\ref{lem:Bound SN4}
\begin{align*}
&\left|\frac{1}{K}\displaystyle\sum_{k=1}^{K}\displaystyle\sum_{n=1}^{N}e_p(\lambda g^{k+n})\right|
\leq \\ & \quad \quad \quad   p^{1/8}N^{167/192+o(1)}\left(1+\left(\frac{N^2}{t}\right)^{1/192}\right)K^{-25/192+o(1)}\left(1+\left(\frac{K^2}{t}\right)^{1/192}\right)
\end{align*}
and letting $K=\lfloor{N/3}\rfloor$ we get
$$\sigma_{p,g}(N)\leq \sigma_{p,g}(\lfloor{N/3}\rfloor)+
p^{1/8}N^{71/96+o(1)}\left(1+\left(\frac{N^2}{t}\right)^{1/96}\right).$$
Repeating the above argument recursively, we end up with $O(\log{N})$ terms all bounded by 
$$p^{1/8}N^{71/96}\left(1+\left(\frac{N^2}{t}\right)^{1/96}\right)$$
which gives
\begin{align}
\label{bbound 1}
\max_{\gcd(\lambda,p)=1}|S_{g,p}(\lambda,N)|\leq p^{1/8}N^{71/96+o(1)}\left(1+\left(\frac{N^2}{t}\right)^{1/96}\right).
\end{align}
Also, we have from H\"{o}lder's inequality,
\begin{align*}
\left|\displaystyle\sum_{k=1}^{K}\displaystyle\sum_{n=1}^{N}e_p(\lambda g^{k+n})\right|&\leq K^3
\displaystyle\sum_{k=1}^{K}\left|\displaystyle\sum_{n=1}^{N}e_p(\lambda g^{k+n})\right|^4
\leq \displaystyle\sum_{a \in \F_p}|S_{p,g}(a,N)|^4
\end{align*}
so by Theorem~\ref{lem:Bound SN4} we get
\begin{align*}
\left|\displaystyle\sum_{k=1}^{K}\displaystyle\sum_{n=1}^{N}e_p(\lambda g^{k+n})\right|
\leq p^{1/4}K^{-1/4+o(1)}N^{71/96+o(1)}\left(1+\left(\frac{N^2}{t}\right)^{1/96}\right)
\end{align*}
and taking $K=\lfloor{N/3}\rfloor$ gives
$$\sigma_{p,g}(N)\leq \sigma_{p,g}(\lfloor{N/3}\rfloor)+ p^{1/4}N^{47/96+o(1)}\left(1+\left(\frac{N^2}{t}\right)^{1/96}\right).$$
As before we end up with the bound
\begin{equation}
\label{bbound 2}
\max_{\gcd(\lambda,p)=1}|S_{g,p}(\lambda,N)|\leq p^{1/4}N^{47/96+o(1)}\left(1+\left(\frac{N^2}{t}\right)^{1/96}\right)
\end{equation}
and the result follows combining~\eqref{bbound 1} and~\eqref{bbound 2}.

\section{Proof of Theorem 3}
Let $\cA \subset \F_p^{*}$ be the subgroup generated by $g$, so by~\cite[Theorem~34]{Sk} we have
\begin{equation}
\label{BE1}
\cE_{+}(\cA,\cA)\ll \begin{cases} t^{22/9}(\log{p})^{2/3}, \quad \text{if} \ \ t\le p^{3/5}(\log{p})^{-6/5}, \\ t^3p^{-1/3}(\log{p})^{4/3}, \quad \text{if} \ \ t>p^{3/5}(\log{p})^{-6/5}. \end{cases}
\end{equation}
We consider first when $t\leq p^{1/2}$. Combining Lemma~\ref{HBK1} with~\eqref{BE1} gives
$$\max_{\gcd(\lambda,p)=1}|S_{g,p}(\lambda,N)|\leq p^{1/8}t^{22/36}(\log{p})^{7/6}.$$
For $p^{1/2}<t\le p^{3/5}(\log{p})^{-6/5}$ we have,
$$\max_{\gcd(\lambda,p)=1}|S_{g,p}(\lambda,N)|\leq p^{1/4}t^{13/36}(\log{p})^{7/6}.$$
If  $p^{3/5}(\log{p})^{-6/5}< t \le p^{2/3}(\log{p})^{-2/3}$
$$\max_{\gcd(\lambda,p)=1}|S_{g,p}(\lambda,N)|\leq p^{1/6}t^{1/2}(\log{p})^{4/3}$$
and for $p^{2/3}(\log{p})^{-2/3}<t$, from~\cite[Lemma 2]{Kor}
$$\max_{\gcd(\lambda,p)=1}|S_{g,p}(\lambda,N)|\leq p^{1/2}\log{p}$$
and the result follows combining the above bounds.

\end{document}